\newtheorem{theorem}{Theorem}
\newtheorem{corollary}[theorem]{Corollary}
\newtheorem{lemma}[theorem]{Lemma}
\newtheorem{proposition}[theorem]{Proposition}
\newtheorem{definition}[theorem]{Definition}
\newcommand{\bthm}{\begin{theorem}}
\newcommand{\ethm}{\end{theorem}}
\newcommand{\bcor}{\begin{corollary}}
\newcommand{\ecor}{\end{corollary}}
\newcommand{\blem}{\begin{lemma}}
\newcommand{\elem}{\end{lemma}}
\newcommand{\bprop}{\begin{proposition}}
\newcommand{\eprop}{\end{proposition}}
\newcommand{\bdefn}{\begin{definition}}
\newcommand{\edefn}{\end{definition}}
\newcommand{\bpf}{\begin{proof}}
\newcommand{\epf}{\end{proof}}
\newcommand{\ienu}[1]{\begin{enumerate}#1\end{enumerate}}
\newcommand{\iali}[1]{\begin{align}#1\end{align}}
\newcommand{\ialid}[1]{\begin{aligned}#1\end{aligned}}
\newcommand{\ieqn}[1]{\begin{equation}#1\end{equation}}
\newcommand{\tr}{} 
\newcommand{\tb}{} 
\newcommand{\tg}{} 
\DeclareMathOperator{\rnk}{rank}
\DeclareMathOperator{\krn}{ker}
\DeclareMathOperator{\ran}{ran}
\DeclareMathOperator{\diag}{diag}
\newcommand{\bR}{\mathbb{R}}
\newcommand{\cE}{\mathcal{E}}
\newcommand{\cT}{\mathcal{T}}
\newcommand{\cI}{\mathcal{I}}
\newcommand{\cP}{\mathcal{P}}
\newcommand{\cV}{\mathcal{V}}
\newcommand{\cG}{\mathcal{G}}
\renewcommand{\hat}{\widehat}
\newcommand{\norm}[1]{\| #1 \|}
\newcommand{\ip}[2]{\left\langle#1,#2\right\rangle}
\newcommand{\cart}{\,\square\,}
\newcommand{\etal}{{et~al.}}
\begin{document}

%

%

\twocolumn[


\aistatstitle{Combinatorial Preconditioners for Proximal Algorithms on Graphs}

\aistatsauthor{ Thomas M\"ollenhoff \And Zhenzhang Ye \And Tao Wu \And Daniel Cremers}

\aistatsaddress{ Department of Informatics, Technical University of Munich, Germany\\ \{thomas.moellenhoff, zhenzhang.ye, tao.wu, cremers\}$@$\hspace{0.3mm}tum.de }
]

\begin{abstract}
We present a novel preconditioning technique for proximal optimization methods
that relies on graph algorithms to construct effective preconditioners.
Such combinatorial preconditioners arise from partitioning 
the graph into forests. We prove that certain decompositions lead to 
a theoretically optimal condition number. \tr{We also show how ideal decompositions 
can be realized using matroid partitioning and propose efficient greedy variants thereof 
for large-scale problems.}
Coupled with specialized solvers for the
resulting scaled proximal subproblems, the preconditioned algorithm
achieves competitive performance in machine learning and vision applications.
\end{abstract}

\section{Introduction}

Many applications in statistics \cite{tibshirani2005sparsity}, learning \cite{hein2011beyond}, and imaging \cite{ChPo11} rely on efficiently solving convex-concave saddle-point problems:
\iali{
\max_{p\in\bR^\cE} \min_{u\in\bR^\cV} ~ G(u)-F^*(p)+\ip{Ku}{p}.
\label{eq:spp}
}
Here the model is formulated on \tr{an undirected weighted} graph $\cG=(\cV,\cE,\omega)$, whose edges are weighted by a given function $\omega: \cE\to\bR_+$. 
The extended real-valued functions $F:\bR^\cE\to\bR\cup\{+\infty\}$ and $G:\bR^\cV\to\bR\cup\{+\infty\}$ are assumed to be proper, lower semi-continuous and convex. The notation $F^*$ refers to the convex conjugate of $F$. The (linear) vertex-to-edge map $K:\bR^\cV\to\bR^\cE$ is defined by
\ieqn{
K=\diag(\omega)\nabla, \notag
\label{eq:wgrad}
} 
where $\nabla$ is the (transposed) incidence matrix of $\cG$, i.e.
\ieqn{
(\nabla u)_e = u_{i}-u_{j}, \quad \forall e=(i,j)\in\cE, \notag
\label{eq:wtv}
}
\tr{with arbitrarily fixed orientation}.
For $F$ being the $\ell^1$-norm, i.e.~$F(\cdot)=\|\cdot\|_1$, this choice yields the total-variation semi-norm of functions on a weighted graph. \tg{In addition to} its ubiquitous applications in image processing and computer vision, this semi-norm has recently gained considerable attention in unsupervised learning~\cite{hein2011beyond,hein2013total,bresson2013multiclass}, semi-supervised learning~\cite{garcia2014multiclass}, collaborative filtering~\cite{benzi2016song}, clustering~\cite{garcia2014multiclass} and statistical inference~\cite{xin2014efficient}.

Among other proximal algorithms (see \cite{PaBo13,ChPo16b} and the references therein for an overview), the primal-dual hybrid gradient (PDHG) algorithm \cite{AHU58, ZhCh08, PCBC-ICCV09, EZC10, ChPo11} is a popular solver for the problem in \eqref{eq:spp}. 
A general formulation of PDHG iterations \cite{ChPo16b} appears as follows:
\iali{
\hspace{-0.1cm}&u^{k+1} = \arg\min_{u \in \bR^\cV}~ G(u) + \langle{p^k},{Ku}\rangle + \frac{s}{2}\norm{u-u^k}_S^2, \label{eq:ppdhg1}\\ 
&p^{k+1} = \arg\min_{p \in \bR^\cE} ~ F^*(p) - \ip{K(2u^{k+1}-u^k)}{p} \notag\\
 &\qquad\qquad +\frac{t}{2}\norm{p-p^k}_T^2. \label{eq:ppdhg2}
}
Here $S\in\bR^{|\cV|\times|\cV|}$ and $T\in\bR^{|\cE|\times|\cE|}$ are \tr{symmetric} positive definite matrices, \tg{such} that $\|\cdot\|_S$ is a scaled norm defined by $\|u\|_S^2={\ip{u}{u}_S}=u ^\top Su$ and analogously for $\|\cdot\|_T$. For given $S$ and $T$, the convergence of PDHG is guaranteed if the \tr{(inverse)} step sizes $s,t$ satisfy $s t > \norm{T^{-1/2} K S^{-1/2}}^2$, cf.~\cite[Lemma 1]{PoCh11}.
Interestingly, as pointed out by \cite{EZC10,ChPo11,ChPo16b}, the formulation in \eqref{eq:ppdhg1}--\eqref{eq:ppdhg2} provides a flexible framework \tg{for} deriving various types of proximal splitting algorithms, e.g.~the proximal gradient method, \tr{Douglas-Rachford splitting,} and (linearized) ADMM. 
Proximal algorithms in form of \eqref{eq:ppdhg1}--\eqref{eq:ppdhg2} are particularly \tr{efficient} when $F$, $G$ admit {\it separable} structures and $S$, $T$ are {\it diagonal}. In this scenario, solutions of the subproblems in \eqref{eq:ppdhg1} and \eqref{eq:ppdhg2} refer to pointwise proximal evaluations. 
Nonetheless, many research efforts have been devoted to further accelerating the convergence speed.

To this end, two categories of acceleration strategies are envisaged: multi-step acceleration and preconditioning.
Classical (optimal) multi-step gradient descent methods are attributed to \cite{Pol64,Nes83}.
In the context of proximal algorithms, the FISTA algorithm \cite{BeTe09} was proposed as an accelerated proximal gradient method, and in \tb{\cite{ChPo16a}} a multi-step PDHG was devised. 

On the other hand, a preconditioning technique aims to accelerate convergence, typically through reducing the number of outer iterations, by choosing proper scaling matrices $S$ and $T$ (also called the preconditioners in this context).
In contrast to their counterparts for solving linear systems, preconditioning techniques for proximal algorithms are \tg{much} less developed. 
Diagonal preconditioners for PDHG were explored in \tb{\cite{PoCh11}}; Preconditioners for other types of proximal algorithms \cite{BeFa12,GiBo14a,GiBo14b,LSS14,ZYDR14,BrSu15,GiBo15,FoBo15} also appeared recently in the \tr{literature}.

A consensus among all existing preconditioning approaches appears that, while favorably reducing the number of outer iterations, preconditioning could explode the overall computational load by expensive inner proximal \tr{evaluations}. For example, for proximal gradient method in minimizing a \tr{sum} of smooth \tr{and} nonsmooth functions, using the (approximate) Hessian of the smooth function as preconditioner may attain superlinear convergence in the outer iteration but \tg{also} lead to very expensive proximal (or backward) steps; see \cite{LSS14}. This is the reason why diagonal preconditioners remain a popular choice in many recent works, and non-diagonal preconditioners designed in \cite{BeFa12,ZYDR14,FrGo16} do not deviate far from the diagonal ones.

In this work, we propose combinatorial preconditioners for proximal algorithms based \tb{on} a partitioning of the original graph into forests. This leads to a class of block diagonal preconditioners, and the resulting PDHG updates refer to solving parallel subproblems on forests. We show how to construct such preconditioners guided by theoretical \tg{estimates} of \tg{the} condition number. Coupled with fast direct solvers for proximal evaluation on forests, we achieve significant performance boost for the \tb{PDHG} algorithm across a series of numerical tests.

\section{Preconditioner and Condition Number}
\label{sec:pdhg}

The choice of $S$ and $T$ can significantly influence the convergence speed of the (generalized) PDHG scheme, \eqref{eq:ppdhg1}--\eqref{eq:ppdhg2}, in practice. In \cite{PoCh11}, Pock and Chambolle showed that \tr{utilization of} diagonal preconditioners $S$ and $T$ yields \tg{a} visible performance boost in comparison with PDHG without preconditioning (i.e.~$S=I,~T=I$). In a slightly different context, Boyd et al.~\cite{DiBo17, FoBo15, GiBo14a,GiBo14b,GiBo15} also considered diagonal preconditioning strategies for other closely related proximal algorithms. 
In particular, they suggested based on extensive numerical experiments that an ideal choice of $S$ and $T$ ought to minimize the \emph{(finite) condition number}
$\kappa(T^{-1/2}KS^{-1/2})$ defined by
\begin{equation} 
\kappa(\cdot) := \frac{\sigma_{\max}(\cdot)}{\sigma_{\min>0}(\cdot)},
\label{eq:cond_num}
\end{equation}
i.e.~the ratio between largest and smallest non-zero singular value. This rule of thumb was computationally pursued by so called matrix equilibration \cite{DiBo17, GiBo15, Zhu17}.

A more quantified connection between the convergence rate of \eqref{eq:ppdhg1}--\eqref{eq:ppdhg2} and $\kappa(T^{-1/2}KS^{-1/2})$ can be drawn in a more specific setting, \tb{e.g.~$G(u) = \frac{1}{2}\norm{u-f}^2$} for some given $f\in\bR^\cV$.
By choosing $s=1,~S=I$ in \eqref{eq:ppdhg1}--\eqref{eq:ppdhg2}, one \tr{comes} up with the following proximal gradient iteration:
\iali{
  p^{k+1} =\, &\arg\min_{p\in\bR^\cE} F^*(p) \notag\\ &
  +\frac t2\norm{p- \tb{\left( p^k - (tT)^{-1} K (K^\top p^k - f) \right)} }_T^2.
  \label{eq:proxgrad}
}

\tr{When} $F^* = \delta_{\tr{Q}}$ is the indicator function of a polyhedral set $\tr{Q} := \{ p \in \bR^{\cE} ~:~ Ap \leq b \}$ it can be shown that iteration \eqref{eq:proxgrad} converges linearly
\cite{necoara2015linear}. More precisely, with step size choice $t = \sigma_{\text{max}}(T^{-1/2}K)$ the linear convergence rate $r \in [0, 1)$ can be derived (cf.~\cite[Theorem~11]{necoara2015linear}) as:
\begin{equation} \notag
  r = \frac{\kappa^2 - 1}{\kappa^2 + 1}, ~~ \kappa = \theta(T^{-1/2}A, T^{-1/2} K)\sigma_{\max}(T^{-1/2} K),
\end{equation}
where $\theta(\cdot, \cdot)$ denotes Hoffman's bound \cite{Hoffman52,wang2014}. 
In the extreme case where $F^*\approx0$ \tb{(e.g., for strong regularizations)}, Hoffman's bound reduces to $\sigma_{\min>0}(T^{-1/2} K)^{-1}$ and \tr{$\kappa$} matches the condition number $\kappa(T^{-1/2} K)$. 

\tb{In a general setting}, a strong correlation between the convergence speed and the condition number is supported by \tb{the numerical evidences in} Section~\ref{sec:ROF} and Table~\ref{table:ROF_synthetic}.

\section{Combinatorial Preconditioners}
Following the discussion in the previous section, reducing $\kappa(T^{-1/2} KS^{-1/2})$ provides a reasonable guideline for the choice of preconditioners.
As $K$ is the weighted incidence matrix of $\cG$, this boils down to constructing ``good" approximations of the graph.
Meanwhile, as discussed in the introduction, the non-diagonal $S$ or $T$ could possibly explode the computational cost of the proximal evaluation in \eqref{eq:ppdhg1}--\eqref{eq:ppdhg2}. Hence, an ideal choice of preconditioners would strike a balance between reduced (outer) iteration number and \tg{costlier proximal evaluations} \tr{per} iteration. 

Bearing this in mind, in this section we propose our combinatorial preconditioners based on graph partitioning. 
This leads to a family of block diagonal preconditioners for $T$. For simplicity we \tg{fix} $S=I$ in our development, and remark that $T$ can be adapted to work properly with any diagonal preconditioner $S$ (e.g.~\tg{the one} used in \cite{PoCh11}). 
In terms of the update scheme \eqref{eq:ppdhg2}, such preconditioning yields proximal evaluation on respective partitioned subgraphs, which can be efficiently carried out by using {the state-of-the-art direct solver on trees \cite{KPR16}}.

As a remark,
there is a connection between our combinatorial preconditioners and the subgraph preconditioners for solving linear systems in graph Laplacians; see, e.g., \cite{spielman2010algorithms} and the references therein. 
Pioneered by Vaidya \tr{and his coworkers} in \tg{the early 1990s} \cite{vaidya90,Joshi96}, a series of works are done in finding a subgraph preconditioner.
An ideal subgraph preconditioner uses a graph Laplacian on some (low-stretch) spanning tree which best preserves connectivity between vertices in the original graph. 
In some sense, our proposed combinatorial preconditioners are dual analogues of the subgraph preconditioners. 

In Section \ref{sec:subgph}, we show how to construct $T$ via graph partitioning and under which sufficient conditions the condition number $\kappa(T^{-1/2} K)$ can be (optimally) reduced. In Section \ref{sec:forest}, such sufficient conditions are algorithmically realized by: (1) chains on regular grid; (2) nested forests on general graph.
In Section \ref{sec:backward}, we detail the message passing based implementation of efficient proximal evaluation on forests.

\subsection{Preconditioning via Graph Partitioning} \label{sec:subgph}
Let the edge set $\cE$ be partitioned into $L$ mutually disjoint subsets, i.e.~$\cE=\bigsqcup_{l=1}^L\cE_l$, such that each subgraph $\cG_l=(\cV,\cE_l,\omega|_{\cE_l})$ is a {\it forest},
i.e.~$\cG_l$ has no cycle.
Correspondingly, we define $P_l$ as the canonical projection from $\bR^\cE$ to $\bR^{\cE_l}$, i.e.~$P_lp=p|_{\cE_l}$ for each $p\in\bR^\cE$.
Thus, the matrix $K$ can be decomposed into submatrices
$\{K_l\}_{l=1}^L$ 
where each $K_l=P_l K\in\bR^{|\cE_l|\times|\cV|}$. Analogously let $\nabla_l = P_l \nabla$ and $\omega_l = \omega|_{\cE_l}$. \tr{Note that each $\nabla_l^\top$ has full column rank.}

We then define our preconditioners as follows
\ieqn{ \label{eq:tmtx}
\ialid{
T_l &:= K_l K_l^\top \quad\forall l\in\{1,...,L\}, \\
T &:= \sum_{l=1}^L P_l^\top T_l P_l.  \notag
}}

It follows from \eqref{eq:cond_num} that
\iali{ \label{eq:lambdapi}
\kappa(T^{-1/2} K)
=\sqrt{\frac{\lambda_{\max}(\Pi)}{\lambda_{\min>0}(\Pi)}},
}
\ieqn{ \label{eq:sumproj}
\hspace{-0.15cm}
\ialid{
\Pi := K^\top T^{-1}K=\sum_{l=1}^L\Pi_l, ~
\Pi_l := \nabla_l^\top (\nabla_l\nabla_l^\top)^{-1} \nabla_l.
}}
Indeed, each $\Pi_l$ is the orthogonal projection onto the subspace $\ran \nabla_l^\top$,
and hence $\ran \Pi_l=\ran \nabla_l^\top$ and $\krn \Pi_l=\krn \nabla_l$.
It follows immediately from \eqref{eq:sumproj} that 
\ieqn{\label{eq:lmaxpi}
\lambda_{\max}(\Pi)\leq L,
}
and therefore it suffices for the sake of convergence to choose step sizes $s,t$ such that $st>L$.

In the remainder of Section \ref{sec:subgph}, we \tg{show analytically} how certain graph partitions can attain optimal condition number $\kappa(T^{-1/2}K)$ in a two-partition scenario, i.e.~$L=2$. 
The two-partition scenario is motivated from many applications where $\cG$ is a 2D regular grid (whose maximum degree equals 4). In this case, it is guaranteed by Nash-Williams' theorem \cite{Nas64} that $\cG$ can be covered by two disjoint forests.
Our theory suggests that either a {\it chain} decomposition or a {\it nested-forest} decomposition makes a good preconditioner on the 2D regular grid. Furthermore, the nested-forest preconditioners \tr{further extend} to any general weighted graph, with aid of either the matroid partition algorithm \cite{Edm65} or some greedy algorithm; see Section \ref{sec:nested_forest}.

The following theorem derives a lower bound for $\kappa(T^{\tr{-1/2}} K)$ for a wide range of two-partition cases. 
We denote by $\rnk \nabla^\top$ the column rank of $\nabla^\top$. In terms of graph theory, $\rnk \nabla^\top$ identifies the maximum number of the edges in $\cG$ that are cycle-free. 
\tr{$T^{-1}$ is understood as the Moore-Penrose pseudoinverse when $T$ is singular.
The two assumptions rule out pathological cases where $\cG$ is either too sparse (e.g.~cycle-free) or too dense (e.g.~fully connected).
In the proof, }
we shall use Weyl's inequality \cite{Wey12}: 
\iali{ \label{eq:weyl1}
\lambda_{i+j-1}(\Pi_1+\Pi_2)\leq \lambda_i(\Pi_1)+\lambda_j(\Pi_2),
}
\tb{where $i, j \geq 1$, $i + j - 1 \leq |\cV|$, and} $\lambda_i(\cdot)$ denotes the $i$-th largest eigenvalue of a real symmetric matrix.

\begin{theorem} \label{thm:lbnd}
  Let $\cG$ be partitioned into two nonempty subgraphs $\cG_1$ and $\cG_2$ (not necessarily forests) such that 
  \ienu{
  \item
  $\ran \nabla_1^\top  \cap \ran \nabla_2^\top \supsetneq \{ 0 \}.$
  \item
  $\rnk \nabla^\top >\min(\rnk \nabla_1^\top,\rnk \nabla_2^\top).$
  }
  Then we have
  $\kappa(T^{\tr{-1/2}} K) \geq \sqrt{2}$.
\end{theorem}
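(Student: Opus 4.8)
The plan is to work entirely with the symmetric matrix $\Pi = \Pi_1 + \Pi_2$ from \eqref{eq:sumproj}, since by \eqref{eq:lambdapi} the inequality $\kappa(T^{-1/2}K) \ge \sqrt{2}$ is equivalent to $\lambda_{\max}(\Pi) \ge 2\,\lambda_{\min>0}(\Pi)$. I would prove this through the two sharper facts $\lambda_{\max}(\Pi) = 2$ and $\lambda_{\min>0}(\Pi) \le 1$, each drawing on one of the two hypotheses.

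For the top eigenvalue I would use that $\Pi_l$ is the orthogonal projection onto $\ran\nabla_l^\top$, as recorded just after \eqref{eq:sumproj}. Hypothesis~(1) supplies a nonzero $v \in \ran\nabla_1^\top \cap \ran\nabla_2^\top$, for which $\Pi_1 v = \Pi_2 v = v$ and hence $\Pi v = 2v$; this gives $\lambda_{\max}(\Pi) \ge 2$, while the general estimate \eqref{eq:lmaxpi} gives $\lambda_{\max}(\Pi) \le L = 2$. Together these pin the largest eigenvalue at exactly $2$.

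For the smallest positive eigenvalue I would invoke Weyl's inequality \eqref{eq:weyl1} with a carefully chosen index pair. First note that, as a sum of positive semidefinite projections, $\Pi$ has range $\ran\nabla_1^\top + \ran\nabla_2^\top = \ran\nabla^\top$, so $\rnk\Pi = \rnk\nabla^\top =: r$ and the smallest positive eigenvalue is exactly $\lambda_r(\Pi)$. Relabel the subgraphs so that $r_1 := \rnk\nabla_1^\top = \min(\rnk\nabla_1^\top, \rnk\nabla_2^\top)$; then hypothesis~(2) reads $r > r_1$, so that $j := r - r_1 \ge 1$ is a valid index. Applying \eqref{eq:weyl1} with $i = r_1 + 1$ and this $j$ (note $i + j - 1 = r \le |\cV|$) yields $\lambda_r(\Pi) \le \lambda_{r_1+1}(\Pi_1) + \lambda_{r-r_1}(\Pi_2)$. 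Since $\Pi_1$ is a projection of rank $r_1$ its $(r_1+1)$-th eigenvalue vanishes, and any eigenvalue of the projection $\Pi_2$ is at most $1$, so the right-hand side is at most $1$; hence $\lambda_{\min>0}(\Pi) = \lambda_r(\Pi) \le 1$.

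Combining the two bounds gives $\lambda_{\max}(\Pi)/\lambda_{\min>0}(\Pi) \ge 2$ and therefore $\kappa(T^{-1/2}K) \ge \sqrt{2}$ via \eqref{eq:lambdapi}. The main obstacle, and the only genuinely delicate point, is the bookkeeping in the Weyl step: one must see that the right index choice makes exactly one of the two projection eigenvalues drop to zero, and that it is precisely hypothesis~(2) (after the relabeling) that keeps $j = r - r_1$ positive and thereby licenses the inequality. Hypothesis~(1) plays the complementary role of forcing the top eigenvalue up to the maximal value $2$ allowed by \eqref{eq:lmaxpi}.
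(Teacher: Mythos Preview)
Your proof is correct and follows essentially the same route as the paper: both establish $\lambda_{\max}(\Pi)=2$ from hypothesis~(1) together with the bound $\lambda_{\max}(\Pi)\le 2$, and both bound $\lambda_{\min>0}(\Pi)=\lambda_r(\Pi)\le 1$ via Weyl's inequality \eqref{eq:weyl1}. The only cosmetic difference is the index pair in Weyl's inequality---the paper takes $(i,j)=(1,r)$ (after relabeling so that $\rnk\nabla_2^\top$ is the smaller rank, giving $\lambda_r(\Pi)\le\lambda_1(\Pi_1)+\lambda_r(\Pi_2)=1+0$), whereas you take $(i,j)=(r_1+1,\,r-r_1)$ to make the \emph{first} summand vanish instead; both choices yield the same bound.
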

\begin{proof}
By Weyl's inequality \eqref{eq:weyl1}, we have $\lambda_1(\Pi)\leq\lambda_1(\Pi_1)+\lambda_1(\Pi_2)=2$. Moreover, condition 1 ensures that there exists some nonzero $v\in\bR^\cV$ with $\Pi v=\Pi_1 v+\Pi_2 v=2v$. Hence, we must have $\lambda_1(\Pi)=2$.

Let $r=\rnk \nabla^\top$. Without loss of generality, we assume $\rnk \nabla_1^\top\geq\rnk \nabla_2^\top$. Hence $r>\rnk \nabla_2^\top$ due to condition 2.
Again using \eqref{eq:weyl1}, we have $\lambda_{\min>0}(\Pi)=\lambda_r(\Pi)\leq \lambda_1(\Pi_1)+\lambda_r(\Pi_2)=1+0=1$. Altogether, the conclusion follows in view of \eqref{eq:lambdapi}.
\end{proof}

We proceed to propose sufficient conditions for graph partitioning (in particular when $L=2$) 
which guarantee optimal condition number \tb{$\kappa(T^{-1/2}K)=\sqrt{2}$}. 

\begin{theorem}
  Let $\cG$ be partitioned into two forests $\cG_1$ and $\cG_2$.
  If the following conditions are satisfied
  \begin{enumerate}
    \item $\Pi_1 \Pi_2 = \Pi_2 \Pi_1$, \label{item:commute}
    \item $\krn (\Pi_1 \Pi_2) \setminus \krn \Pi \neq \emptyset$, \label{item:kernel}
    \item $\ran \Pi_1  \cap \ran \Pi_2 \supsetneq \{ 0 \}$, \label{item:range}
  \end{enumerate}
  then $\kappa(T^{\tr{-1/2}} K) = \sqrt{2}$.
  \label{lem:condition}
\end{theorem}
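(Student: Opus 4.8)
The plan is to exploit the fact that, by condition~\ref{item:commute}, $\Pi_1$ and $\Pi_2$ are commuting orthogonal projections, hence simultaneously diagonalizable in a common orthonormal eigenbasis. Along such a basis each $\Pi_l$ acts with eigenvalues in $\{0,1\}$, so every common eigenvector $v$ satisfies $\Pi_1 v = a v$ and $\Pi_2 v = b v$ with $(a,b)\in\{0,1\}^2$, and therefore $\Pi v = (a+b)v$. Consequently the spectrum of $\Pi=\Pi_1+\Pi_2$ is contained in $\{0,1,2\}$. The whole argument then reduces to showing that both $2$ and $1$ actually occur as eigenvalues: the former gives $\lambda_{\max}(\Pi)=2$, the latter gives $\lambda_{\min>0}(\Pi)=1$, and then $\kappa(T^{-1/2}K)=\sqrt{2}$ follows from \eqref{eq:lambdapi}.

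For the largest eigenvalue I would invoke condition~\ref{item:range}: pick any nonzero $v\in\ran\Pi_1\cap\ran\Pi_2$. Since each $\Pi_l$ is the orthogonal projection onto $\ran\Pi_l$, we have $\Pi_1 v = v$ and $\Pi_2 v = v$, so $\Pi v = 2v$, i.e.~$2$ is an eigenvalue. Combined with the universal bound $\lambda_{\max}(\Pi)\leq L=2$ from \eqref{eq:lmaxpi} (or simply because $2$ is the top of the spectrum $\{0,1,2\}$), this yields $\lambda_{\max}(\Pi)=2$.

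The delicate part is $\lambda_{\min>0}(\Pi)=1$, where condition~\ref{item:kernel} enters. Because $\Pi_1,\Pi_2$ commute, $\Pi_1\Pi_2$ is itself the orthogonal projection onto $\ran\Pi_1\cap\ran\Pi_2$, and a common eigenvector $v$ with eigenvalue pair $(a,b)$ satisfies $\Pi_1\Pi_2 v = ab\,v$. I would take any $w\in\krn(\Pi_1\Pi_2)\setminus\krn\Pi$ and expand it in the common eigenbasis, $w=\sum_i c_i v_i$ with pairs $(a_i,b_i)$. The condition $\Pi_1\Pi_2 w = 0$ forces $a_i b_i = 0$ for every index with $c_i\neq 0$, while $\Pi w\neq 0$ forces some index $i_0$ with $c_{i_0}\neq 0$ and $a_{i_0}+b_{i_0}\neq 0$. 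For that index, $a_{i_0}b_{i_0}=0$ together with $a_{i_0}+b_{i_0}\neq 0$ forces exactly one of $a_{i_0},b_{i_0}$ to equal $1$, so $v_{i_0}$ is an eigenvector of $\Pi$ with eigenvalue $1$. Hence $1$ is in the spectrum, and since every positive eigenvalue is at least $1$, we conclude $\lambda_{\min>0}(\Pi)=1$.

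I expect the main obstacle to be precisely this extraction step: condition~\ref{item:kernel} only supplies a single vector $w$ that need not itself be an eigenvector, so one cannot read off an eigenvalue directly. The resolution is the simultaneous diagonalization granted by commutativity, which lets both the kernel and range hypotheses be rephrased purely in terms of the eigenvalue pairs $(a_i,b_i)\in\{0,1\}^2$; once that reformulation is in place, the remaining combinatorics on $\{0,1\}^2$ is routine. Substituting $\lambda_{\max}(\Pi)=2$ and $\lambda_{\min>0}(\Pi)=1$ into \eqref{eq:lambdapi} then gives $\kappa(T^{-1/2}K)=\sqrt{2}$.
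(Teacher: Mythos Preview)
Your proof is correct but takes a different route from the paper's. Both arguments obtain $\lambda_{\max}(\Pi)=2$ identically (pick a nonzero $v\in\ran\Pi_1\cap\ran\Pi_2$). For $\lambda_{\min>0}(\Pi)=1$, you invoke simultaneous diagonalization of the commuting projections to conclude that the spectrum of $\Pi$ lies in $\{0,1,2\}$, and then extract a basis vector with eigenvalue~$1$ from any $w\in\krn(\Pi_1\Pi_2)\setminus\krn\Pi$. The paper instead works directly with a Rayleigh quotient: writing $\lambda_{\min>0}(\Pi)=\min_{\Pi v\neq0}\ip{v}{\Pi^3 v}/\|\Pi v\|^2$ and expanding (using $\Pi_l^2=\Pi_l$ and commutativity) to obtain
\[
\frac{\ip{v}{\Pi^3 v}}{\|\Pi v\|^2}
= 1+\frac{4\ip{v}{\Pi_1\Pi_2 v}}{\ip{v}{\Pi_1 v}+\ip{v}{\Pi_2 v}+2\ip{v}{\Pi_1\Pi_2 v}}\geq 1,
\]
with equality attained at the $v$ furnished by condition~\ref{item:kernel}. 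Your argument is more structural and makes transparent \emph{why} only the three eigenvalues $0,1,2$ can occur, at the cost of appealing to the spectral theorem for commuting symmetric operators; the paper's argument is self-contained algebra that never names a common eigenbasis but requires the somewhat opaque $\Pi^3$ identity. Either way, the key ingredients---that $\Pi_1\Pi_2$ is itself the projection onto $\ran\Pi_1\cap\ran\Pi_2$, and that condition~\ref{item:kernel} rules out the spectrum collapsing to $\{0,2\}$---are the same.
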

\begin{proof}
The proof of $\lambda_{\max}(\Pi)=2$ is identical to \tb{that for} Theorem \ref{thm:lbnd}. 

On the other hand, we have 
\iali{
\lambda_{\min>0}(\Pi)
&=\min\{\ip{u}{\Pi u}/\|u\|^2: u\in \ran \Pi,~u\neq 0\}, \notag\\
&=\min\{\ip{v}{\Pi^3 v}/\|\Pi v\|^2: \Pi v\neq0\}, \notag
} 
where by commutativity (i.e.~condition 1)
\iali{
\frac{\ip{v}{\Pi^3 v}}{\|\Pi v\|^2}
&=\frac{\ip{v}{\Pi_1v}+\ip{v}{\Pi_2v}+6\ip{v}{\Pi_1\Pi_2v}}{\ip{v}{\Pi_1v}+\ip{v}{\Pi_2v}+2\ip{v}{\Pi_1\Pi_2v}} \notag\\
&=1+\frac{4\ip{v}{\Pi_1\Pi_2v}}{\ip{v}{\Pi_1v}+\ip{v}{\Pi_2v}+2\ip{v}{\Pi_1\Pi_2v}}. \notag
}
Note that $\Pi_1\Pi_2$ is an orthogonal projection onto $\ran \Pi_1  \cap \ran \Pi_2$, and hence $\ip{v}{\Pi_1\Pi_2v}\geq0$ for all $v$. Meanwhile, condition 2 asserts that $\Pi v\neq0$ and $\ip{v}{\Pi_1\Pi_2v}=0$ for some $v$. Altogether, $\lambda_{\min>0}(\Pi)=1$ and the conclusion follows.
\end{proof}

\begin{theorem}
  \label{thm:nested}
  Let $\cG$ be partitioned into $L$ (nonempty) nested forests, namely $\{ \cG_l \}_{l=1}^L$, in the sense that
  \iali{
    \ran \Pi_1 = ... = \ran \Pi_{\hat l} \supsetneq \ran \Pi_{\hat l+1}  \supseteq ... \supseteq \ran \Pi_L.  \notag
  }
  Then we have $\kappa(T^{\tr{-1/2}} K) = \sqrt{L/\hat l}$.
\end{theorem}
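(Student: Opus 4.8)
The plan is to compute $\lambda_{\max}(\Pi)$ and $\lambda_{\min>0}(\Pi)$ exactly and then read off $\kappa$ from \eqref{eq:lambdapi}. Write $W_l := \ran \Pi_l = \ran \nabla_l^\top$; the hypothesis says these subspaces form a chain under inclusion, with $W_1 = \cdots = W_{\hat l} =: W$ largest and $W_{\hat l+1} \subsetneq W$. The first step is to note that orthogonal projections onto nested subspaces commute: whenever $W_a \supseteq W_b$ one has $\Pi_a \Pi_b = \Pi_b \Pi_a = \Pi_b$, since applying the larger projection to a vector already lying in $W_b$ changes nothing. As the $W_l$ are totally ordered, all $\Pi_l$ commute pairwise, so the symmetric matrices $\{\Pi_l\}_{l=1}^L$ share a common orthonormal eigenbasis.

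In that eigenbasis every eigenvector $v$ obeys $\Pi_l v \in \{0,v\}$, hence $\Pi v = c(v)\,v$ with $c(v) := |\{\, l : v \in W_l \,\}|$; thus the spectrum of $\Pi$ is governed by how deep each eigenvector sits in the chain. If $v \perp W$ then $v \perp W_l$ for all $l$ and $\Pi v = 0$, so $W^\perp \subseteq \krn \Pi$. On the other hand $W$ is $\Pi$-invariant and $\Pi_l|_W = I$ for $l \le \hat l$, so $\Pi|_W \succeq \hat l\, I \succ 0$; consequently $\ran \Pi = W$ and the positive spectrum of $\Pi$ is exactly the spectrum of $\Pi|_W$.

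It remains to locate the two extreme positive eigenvalues. For the maximum, $W_L \neq \{0\}$ because $\cG_L$ is a nonempty forest, and any nonzero $v \in W_L$ lies in every $W_l$ (as $W_L$ is the smallest member of the chain), so $c(v) = L$; together with $\lambda_{\max}(\Pi) \le L$ from \eqref{eq:lmaxpi} this gives $\lambda_{\max}(\Pi) = L$. For the minimum positive eigenvalue, every $v \in \ran \Pi = W$ lies in $W_1 = \cdots = W_{\hat l}$, forcing $c(v) \ge \hat l$ on all positive eigenvectors, so $\lambda_{\min>0}(\Pi) \ge \hat l$. This bound is attained: the strict inclusion $W \supsetneq W_{\hat l+1}$ makes $W \cap W_{\hat l+1}^\perp$ nontrivial, and a nonzero $v$ there satisfies $\Pi_l v = v$ for $l \le \hat l$ and $\Pi_l v = 0$ for $l > \hat l$ (since $W_l \subseteq W_{\hat l+1}$), whence $c(v) = \hat l$. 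Therefore $\lambda_{\min>0}(\Pi) = \hat l$, and \eqref{eq:lambdapi} yields $\kappa(T^{-1/2} K) = \sqrt{L/\hat l}$.

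The commutation identity and the indicator-sum description of the spectrum are routine; the step that needs real care is the minimum positive eigenvalue, which demands both the lower bound (every vector carrying positive eigenvalue is trapped in the $\hat l$ coincident copies of $W$) and a matching eigenvector realizing $c(v) = \hat l$ — precisely where the strictness $W_{\hat l} \supsetneq W_{\hat l+1}$ enters.
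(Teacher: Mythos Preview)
Your proof is correct and follows essentially the same route as the paper: compute $\lambda_{\max}(\Pi)=L$ via an eigenvector in $\ran\Pi_L$, compute $\lambda_{\min>0}(\Pi)=\hat l$ via the lower bound on $\ran\Pi=W$ together with a witness in $W\cap W_{\hat l+1}^\perp$, and conclude by \eqref{eq:lambdapi}. The paper's argument is terser---it bounds the Rayleigh quotient on $\ran\Pi$ directly without invoking commutativity or a joint eigenbasis---while your explicit use of $\Pi_a\Pi_b=\Pi_b$ for $W_a\supseteq W_b$ and the resulting indicator-count description of the spectrum is a clean way to make the same computation rigorous (and in particular your choice of witness $v\in W\cap W_{\hat l+1}^\perp$ is more precise than the paper's ``$v\in\ran\Pi\setminus\ran\Pi_{\hat l+1}$'').
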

\begin{proof}
Recall \eqref{eq:lmaxpi} that $\lambda_{\max}(\Pi)\leq L$. Indeed, we have $\lambda_{\max}(\Pi)=L$ since $\Pi v=Lv$ for any nonzero $v\in \ran \Pi_L$. 
On the other hand, note that $\ran \Pi=\ran \Pi_1=... = \ran \Pi_{\hat l}$.
Therefore, for all $v\in \ran \Pi$, we have$\ip{v}{\Pi v}\geq \sum_{l=1}^{\hat l}\ip{v}{\Pi_l v}=\hat l\|v\|^2$ and the equality holds when $v\in \ran\Pi \setminus \ran \Pi_{\hat l+1}$. This gives \tb{$\lambda_{\min > 0}(\Pi)=\hat l$}, which concludes the proof.
\end{proof}

\subsection{Two Classes of Forest Preconditioners} \label{sec:forest}
\begin{figure*}
  \centering
  \begin{tabular}{cccc}
    \includegraphics[width=0.225\linewidth, trim={2cm 1.5cm 2cm 1.5cm},clip]{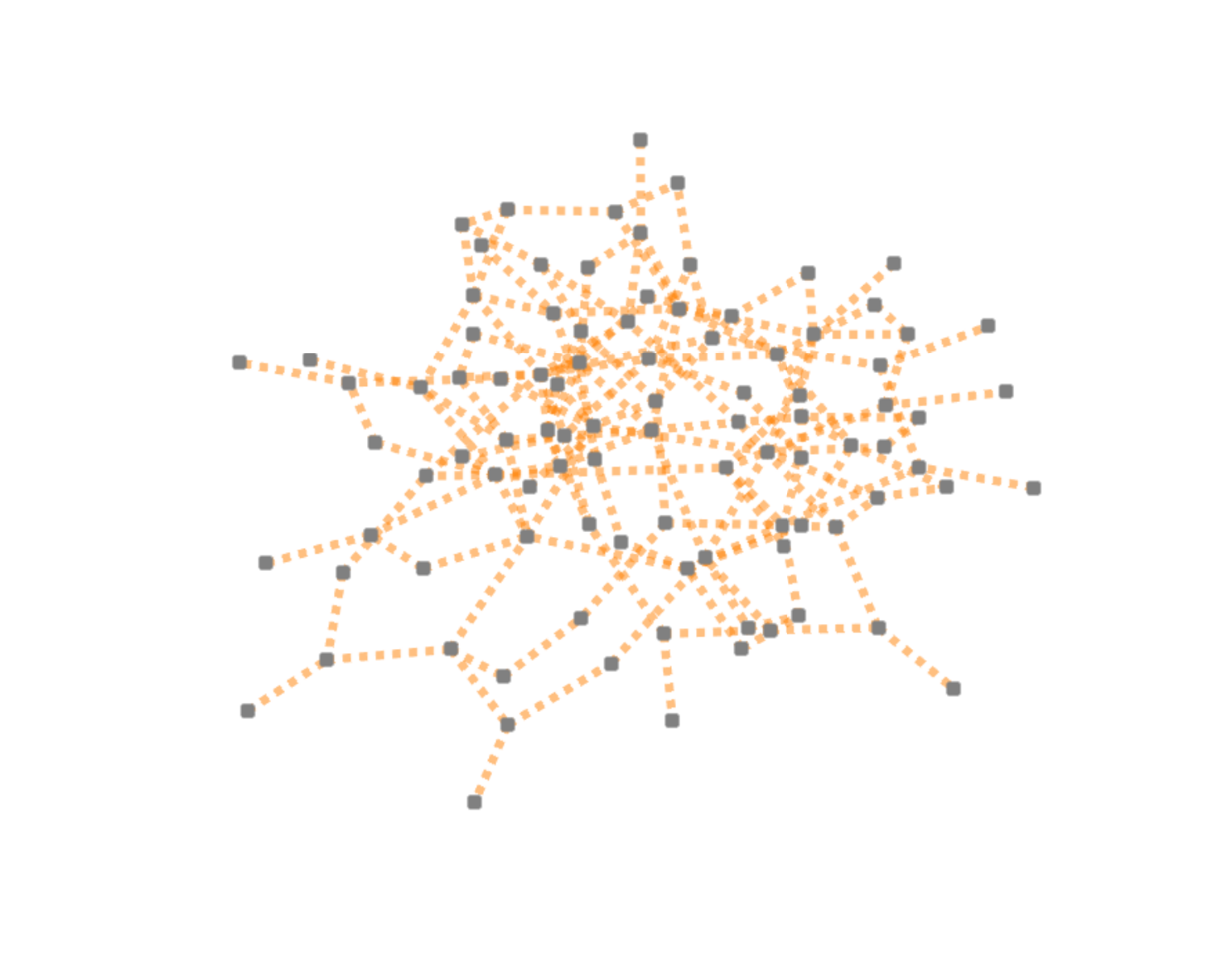}&
    \includegraphics[width=0.225\linewidth, trim={2cm 1.5cm 2cm 1.5cm},clip]{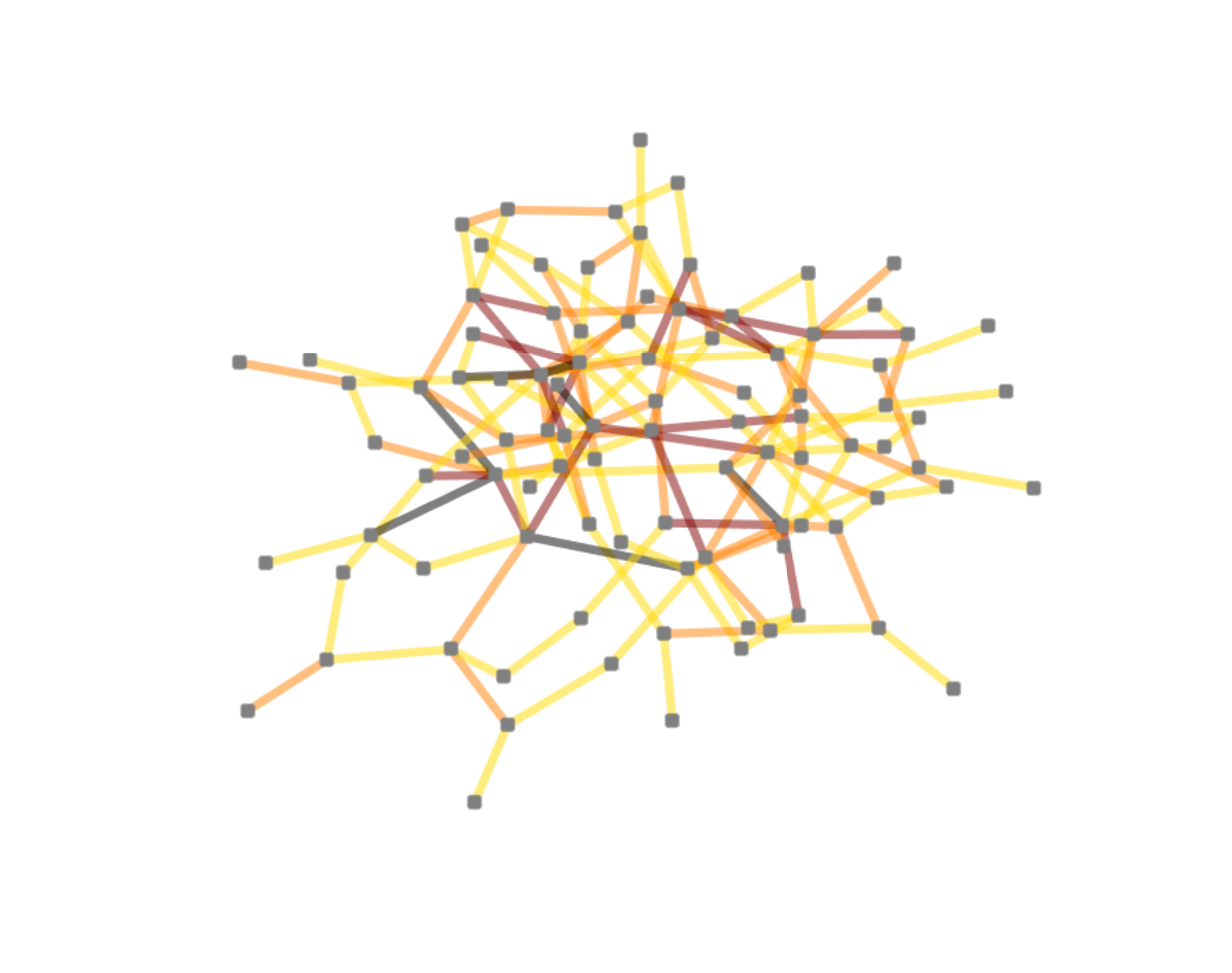} &
    \includegraphics[width=0.225\linewidth, trim={2cm 1.5cm 2cm 1.5cm},clip]{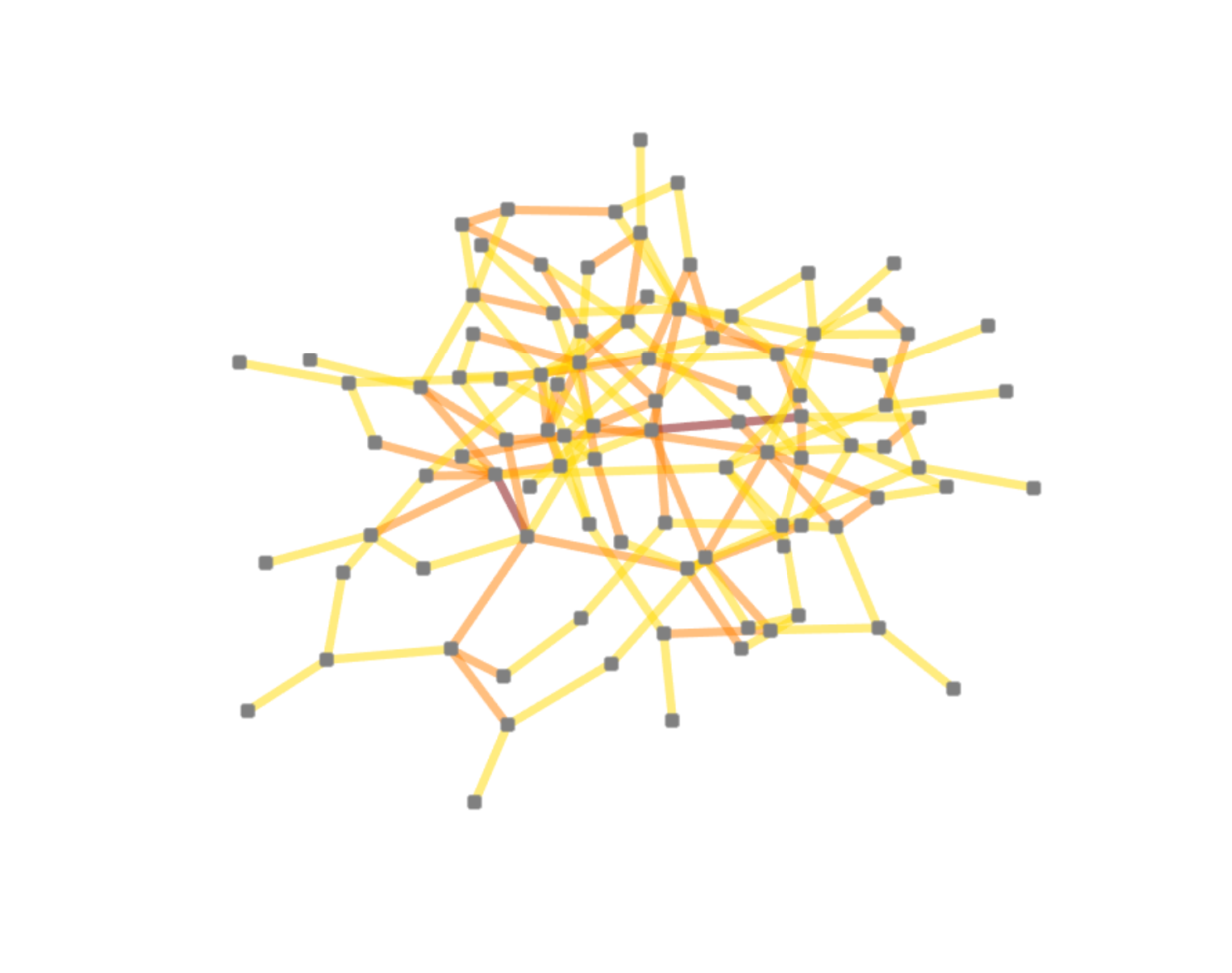}&
    \includegraphics[width=0.225\linewidth, trim={2cm 1.5cm 2cm 1.5cm},clip]{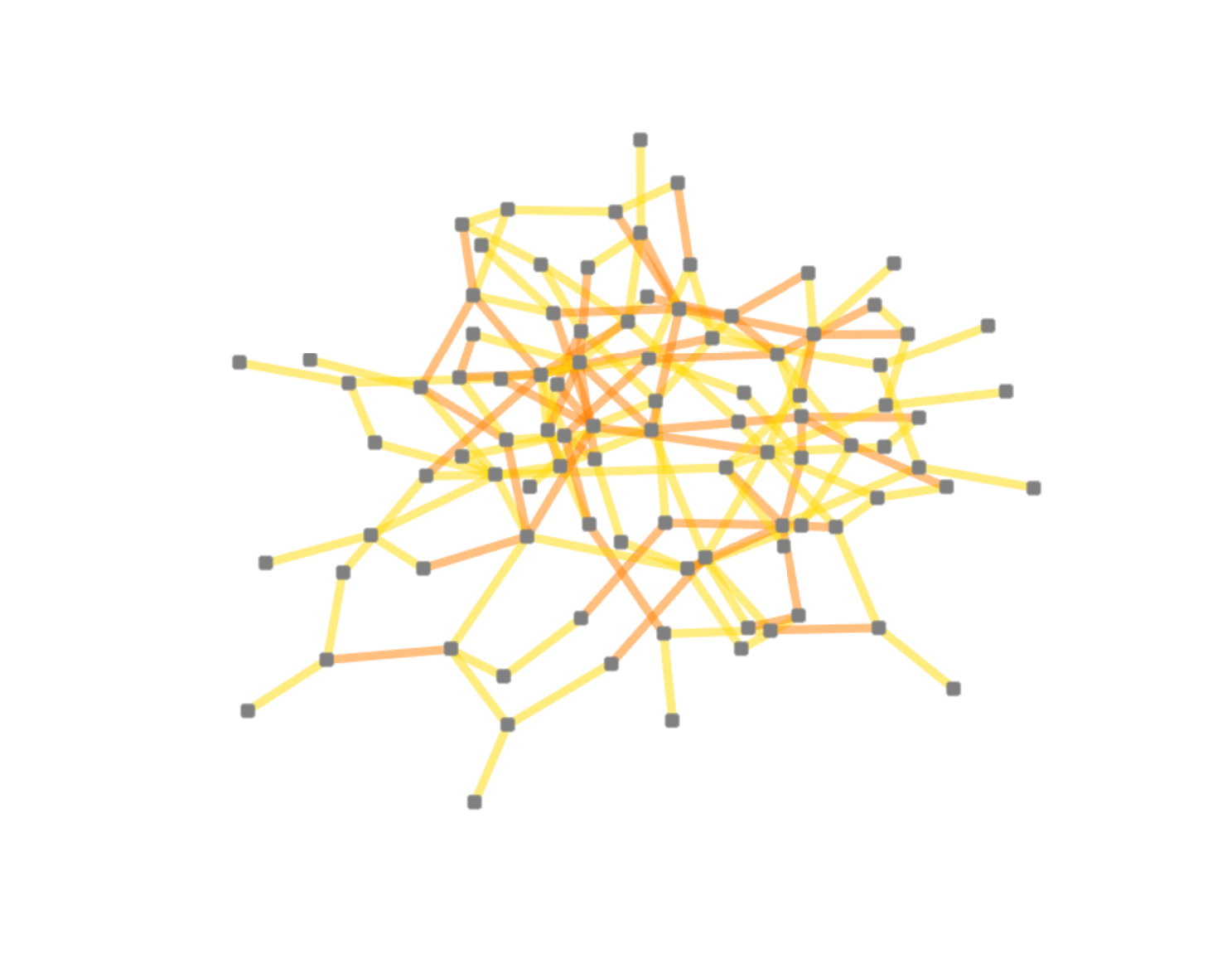}\\
    \makecell{No preconditioning\\ $\kappa(K) = 6.41$} & \makecell{Greedy linear forests\\$\kappa(T^{-1/2}K) = 5.13$} & \makecell{Greedy nested forests\\$\kappa(T^{-1/2}K) = \sqrt{3}$} & \makecell{Matroid partitioning\\$\kappa(T^{-1/2}K) = \sqrt{2}$} 
  \end{tabular}
  \caption{Overview of the proposed graph decompositions on an example graph ($|\cV|=96$, $|\cE|=150$). The decomposition into linear forests does not satisfy the assumptions of
    Theorem~\ref{thm:nested} and does not lead to a significant reduction in condition number. While the greedy nested forest decomposition satisfies the assumptions, it finds a partition into three
    spanning forests leading to a suboptimal condition number. The matroid approach guarantees the best possible condition number.}
\end{figure*}

Given the abstract conditions from Theorem~\ref{lem:condition}~and~\ref{thm:nested} under which graph partitioning achieves 
optimal condition number, the question remains whether such partitions exist and 
how they can be constructed in practice. In the following section we propose optimal
partitioning approaches, depending on the topology of the underlying graph. We
first focus on regular grid graphs, which are ubiquitous 
in signal processing and computer vision applications.

\subsubsection{Chains on Regular Grid}
\label{sec:forest_regular_grid}
Formally, $d$-dimensional grid graphs are given as the $d$-fold Cartesian product
$\cG = \cP_1 \cart \hdots \cart \cP_d$ between path graphs $\{ \cP_l \}_{l=1}^d$, 
$\cP_l = (\{1, \hdots, n_l\}, \{(i, i+1)\}_{i=1}^{n_l-1})$.
For ease of presentation we will focus mostly on two dimensional grids.
Recall that the Cartesian product $(\cV_1, \cE_1) \cart (\cV_2, \cE_2)$ yields a graph with vertex set 
$\cV_1 \times \cV_2$. In that graph, two vertices $(v_1, v_2)$ and $(v_1', v_2')$ are adjacent
if and only if $v_1 = v_1'$ and $v_2$ is adjacent \tg{to} $v_2'$ or $v_2 = v_2'$ and $v_1$ is adjacent \tg{to} $v_1'$.
Whether a given graph is a grid graph can be tested in linear time \cite{Imrich07}. 
The procedure described in \cite{Imrich07} yields a natural decomposition of the $d$-dimensional grid into 
$d$ \emph{linear forests}, i.e., a forest in which every tree is a path graph. Moreover, in the aforementioned decomposition, each forest consists of paths of equal length. To be precise, in the case $d=2$ the decomposition is given by 
\begin{equation}
  \begin{aligned}
    &\cG_1 = \cI_{n_2} \cart \cP_1, ~ \cG_2 = \cP_2 \cart \cI_{n_1}, 
  \end{aligned}
  \label{eq:chains}
\end{equation}
where $\cI_n = (\{ 1, \hdots, n\}, \emptyset)$. We refer to the connected components 
of $\cG_1$ as horizontal chains and $\cG_2$ as vertical chains.
Such a natural splitting into chains enjoys a theoretically optimal condition number among
the large class of decompositions characterized by Theorem~\ref{thm:lbnd}. 
\begin{theorem}
  \label{thm:cond_grid}
  Let $\cG = \cP_1 \cart \cP_2$ be a regular grid of dimension $n_1 \times n_2$ ($n_1,n_2\geq 3$). The condition 
  number in the unpreconditioned case is bounded by
  \begin{equation}
    \kappa(K) \geq \sqrt{2} \max ( n_1, n_2 ) / \sqrt{\pi^2 / 2}.
    \label{eq:grid_unprec}
  \end{equation}
  The natural partitioning of $\cG$ into two linear forests \eqref{eq:chains} 
  achieves condition number
  \begin{equation}
    \kappa \bigl( T^{-1/2} K \bigr) = \sqrt{2}.
    \label{eq:grid_prec}
  \end{equation}
\end{theorem}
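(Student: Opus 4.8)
The plan is to treat the two assertions separately, exploiting the Cartesian-product structure of the grid in both, and to reduce the preconditioned claim entirely to Theorem~\ref{lem:condition}.

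For the optimal value \eqref{eq:grid_prec}, the key is to exhibit a Kronecker factorization of the two projections. Recall $\Pi_l$ is the orthogonal projection onto $\ran \nabla_l^\top = (\krn \nabla_l)^\perp$, and $\krn \nabla_l$ consists of the functions that are constant on each connected component of $\cG_l$. Under the chain decomposition \eqref{eq:chains} these kernels become product subspaces: writing the vertex space (with a suitable ordering of $\cV$) as $\bR^{n_2}\otimes\bR^{n_1}$, the chains of $\cG_1$ force constancy in the $\cP_1$-factor and those of $\cG_2$ in the $\cP_2$-factor, so that $\krn \nabla_1 = \bR^{n_2}\otimes\spn\{\mathbf{1}_{n_1}\}$ and $\krn \nabla_2 = \spn\{\mathbf{1}_{n_2}\}\otimes\bR^{n_1}$. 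Consequently $\Pi_1 = I_{n_2}\otimes Q_1$ and $\Pi_2 = Q_2\otimes I_{n_1}$, where $Q_i = I_{n_i}-\tfrac{1}{n_i}\mathbf{1}\mathbf{1}^\top$ projects off the constants on $\cP_i$. I would verify this identification carefully, after which the three hypotheses of Theorem~\ref{lem:condition} follow mechanically: commutativity holds since $\Pi_1\Pi_2 = Q_2\otimes Q_1 = \Pi_2\Pi_1$; the intersection $\ran \Pi_1\cap\ran \Pi_2 = \ran Q_2\otimes\ran Q_1$ is nonzero because $n_1,n_2\geq 3$; and the vector $v=\mathbf{1}_{n_2}\otimes w$ with $0\neq w\perp\mathbf{1}_{n_1}$ satisfies $\Pi_1\Pi_2 v = 0$ while $\Pi v = \Pi_1 v = v\neq 0$, so $v\in\krn(\Pi_1\Pi_2)\setminus\krn \Pi$. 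Theorem~\ref{lem:condition} then gives $\kappa(T^{-1/2}K)=\sqrt{2}$.

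For the unpreconditioned lower bound \eqref{eq:grid_unprec}, I would take unit edge weights (so $K=\nabla$), giving $\kappa(K)^2 = \lambda_{\max}(L)/\lambda_{\min>0}(L)$ for the grid Laplacian $L=\nabla^\top\nabla$. The product structure yields the Kronecker sum $L = L_{\cP_1}\otimes I_{n_2} + I_{n_1}\otimes L_{\cP_2}$, whose spectrum is $\{\mu_a+\nu_b\}$ with the known path-Laplacian eigenvalues $\mu_a = 4\sin^2(\pi a/(2n_1))$ and $\nu_b = 4\sin^2(\pi b/(2n_2))$. From this I read off $\lambda_{\min>0}(L)=\min(\mu_1,\nu_1)=4\sin^2\!\bigl(\pi/(2\max(n_1,n_2))\bigr)$ and $\lambda_{\max}(L)=\mu_{n_1-1}+\nu_{n_2-1}$. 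Bounding $\lambda_{\min>0}(L)\leq \pi^2/\max(n_1,n_2)^2$ via $\sin x\leq x$, and $\lambda_{\max}(L)\geq 4$ via $\sin^2(\pi(n_i-1)/(2n_i))=\cos^2(\pi/(2n_i))\geq 1/2$ for $n_i\geq 2$, I obtain $\kappa(K)\geq 2\max(n_1,n_2)/\pi = \sqrt{2}\,\max(n_1,n_2)/\sqrt{\pi^2/2}$, as claimed.

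The main obstacle is the tensor identification $\Pi_1 = I_{n_2}\otimes Q_1$, $\Pi_2 = Q_2\otimes I_{n_1}$ in the preconditioned part: everything downstream is routine once the statement ``constant on each chain'' is correctly translated into a factorization of the kernel subspaces, but this requires care in matching the Cartesian-product indexing of \eqref{eq:chains} to the ordering of $\cV$. In the unpreconditioned part the only delicate point is the spectral bookkeeping of the path Laplacian together with the elementary analytic inequalities $\sin x\leq x$ and $\cos^2(\pi/(2n_i))\geq 1/2$.
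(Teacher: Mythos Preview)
Your proposal is correct and follows essentially the same route as the paper. For \eqref{eq:grid_prec} the paper also writes down the Kronecker factorizations $\Pi_1 = I_{n_2}\otimes(I_{n_1}-\tfrac{1}{n_1}\mathbf{1}\mathbf{1}^\top)$ and $\Pi_2 = (I_{n_2}-\tfrac{1}{n_2}\mathbf{1}\mathbf{1}^\top)\otimes I_{n_1}$, invokes the mixed-product rule for commutativity, and verifies conditions~2 and~3 of Theorem~\ref{lem:condition} by exhibiting vectors that are constant along one family of chains but not the other---your example $v=\mathbf{1}_{n_2}\otimes w$ is the symmetric counterpart of the paper's choice. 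For \eqref{eq:grid_unprec} the paper uses the same path-Laplacian spectrum and Cartesian-product additivity, writing eigenvalues as $2-2\cos(\pi i/n)$ and bounding the denominator via $1-\cos x\le x^2/2$; your half-angle formulation $4\sin^2(\pi i/(2n))$ with $\sin x\le x$ is the identical inequality, and your bound $\lambda_{\max}\ge 4$ via $\cos^2(\pi/(2n_i))\ge 1/2$ is equivalent to the paper's implicit use of $\cos(\pi(n_i-1)/n_i)\le 0$. One cosmetic slip: your Kronecker-sum expression for $L$ swaps the factor ordering relative to the convention you use for $\Pi_1,\Pi_2$, but this is immaterial since only the spectrum is used.
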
  
\begin{proof}
  The bound~\eqref{eq:grid_unprec} follows from elementary results in spectral graph theory. The spectrum $\{ \lambda_i \}$ of 
  a path graph of dimension $n$ is $\lambda_{i} = 2 - 2 \cos(\pi i / n)$, $i\in\{1,...,n-1\}$,
  see~\cite[Section~1.4.4]{brouwer12}. The spectrum of the Cartesian product graph is given by the summing 
  all pairs of eigenvalues of the individual graphs, cf.~\cite[Section~1.4.6]{brouwer12}.  
  Using the inequality $1 - \cos(x) \leq x^2 / 2$ we have
  \begin{equation} \notag
    \kappa(K)^2 = \frac{2 - \sum_{l=1}^2 \cos \left(\frac{\pi (n_l - 1)}{n_l} \right) }{1 - \cos \left( \frac{\pi}{\max(n_1, n_2)} \right) } \geq \frac{2 \max(n_1, n_2)^2}{ \pi^2 / 2}.
  \end{equation}
  
  To show \eqref{eq:grid_prec} it suffices to verify the three conditions in Theorem~\ref{lem:condition}. Denoting by $\mathbf{1}_n$ the ones-vector and by $\otimes$ the 
  Kronecker product, the projections are explicitly given by
  \begin{equation} \notag
    \begin{aligned}
&\Pi_1 = I_{n_2}\otimes \left(I_{n_1}-\frac{1}{n_1}(\mathbf{1}_{n_1}\mathbf{1}_{n_1}^\top)\right), \\
&\Pi_2 = \left(I_{n_2}-\frac{1}{n_2}(\mathbf{1}_{n_2}\mathbf{1}_{n_2}^\top)\right)\otimes I_{n_1}.
    \end{aligned}
  \end{equation}
  \tg{Since} $(A \otimes B)(C \otimes D) = (AC \otimes BD)$, the condition $\Pi_1 \Pi_2 = \Pi_2 \Pi_1$ from Theorem~\ref{lem:condition} \tg{follows} directly.
  
  Now note that each projection $\Pi_i$ subtracts the mean on the chains it contains. Conditions 2 and 3 can be verified by counting dimensions.
  For condition 2, simply pick nonzero $u \in \bR^\cV$ to be constant along chains in $\cP_1$ but non-constant along chains in $\cP_2$. For condition 3, pick nonzero $u \in \bR^\cV$ that is zero-mean on the chains in both $\cP_1$ and $\cP_2$.
\end{proof}
We remark that the above results can be generalized to the $d$-dimensional setting, yielding $\kappa(K) \geq \sqrt{d} (\max_i n_i) / \sqrt{\pi^2 / 2}$ and $\kappa \bigl( T^{-1/2} K \bigr) = \sqrt{d}$.
Remarkably the chains preconditioning \tg{mentioned above} makes the condition number independent of the grid size, while for the unpreconditioned case, it exhibits linear growth with respect to the largest grid dimension.

Furthermore, the splitting into chains leads to a particularly efficient evaluation of the dual subproblem in PDHG as we will see later on. Together with the theoretically optimal condition number, this makes \tg{the chains} the number one choice of preconditioner for the regular $d$-dimensional grid. 
\subsubsection{Nested Forests on General Graphs} \label{sec:nested_forest}
\paragraph{Matroid partitioning.}The situation on general graphs is more involved than on grid graphs.
Inspired by Theorem \ref{thm:nested}, we seek to partition $\cG$ into nested forests such that the number of forests is \emph{minimal}, i.e.~equal to the {\it arboricity} of the graph \cite{Nas64}. 
If $\cG$ is connected, the arboricity of $\cG$ can be calculated as:
\tr{$\max\left\{\lceil \frac{|\cE_{\cG'}|}{|\cV_{\cG'}|-1} \rceil: \cV_{\cG'}\subset\cV_{\cG},\,\cE_{\cG'}\subset\cE_{\cG} \right\}$.}

It turns out that the classical matroid partitioning algorithm by Edmonds \cite{Edm65} meets our \tg{requirements}. 
In short, matroid partitioning progressively inserts an idle edge into the forest partitions. To preserve all partitions cycle-free, it \tr{relies} on a primitive operation which detects a simple cycle (also called {\it circuit}) whenever this occurs due to the insertion of a new edge into a forest. By the nature of the algorithm, the resulting partitions are guaranteed to be (a minimal number of) nested forests, and hence realize the condition posed in Theorem \ref{thm:nested}.

\begin{table}
  \centering
  \resizebox{\linewidth}{!}{
  \begin{tabular}{c|cc|cc|cc|cc}
    & & &&& &\multicolumn{2}{c}{Nested Forest} \\
    & \multicolumn{2}{c|}{No Precond.} & \multicolumn{2}{c|}{Linear Forest} & \multicolumn{2}{c|}{Greedy} & \multicolumn{2}{c}{Matroid} \\
    $\frac{|\cE|}{|\cV|}$    & $\kappa^2 $ & it & $\kappa^2 $ & it & $\kappa^2 $ & it & $\kappa^2 $ & it \\[1mm]
    \hline
    0.68 & 53.8 & 1624 & 24.9 & 789 & \textbf{1} & \textbf{36} & \textbf{1} & \textbf{36} \\
    1.11 & 257.0 & 6245 & 148.7 & 3782 & \textbf{2} & \textbf{80} & \textbf{2} & \textbf{80} \\
    2.36 & 82.2 & 2061 & 14.1 & 577 & 4 & 143 & \textbf{3} & \textbf{103}\\
    2.98 & 27.7 & 1010 & 12.4 & 484 & 5 & 194 & \textbf{4} & \textbf{137}\\
    4.25 & 24.9 & 800 & 10.1 & 419 & 7 & 254 & \textbf{5} & \textbf{166}\\
    6.48 & 7.7 & 367 & 5.6 & 176 & 10 & 362 & \textbf{1.75} & \textbf{59}\\
    \hline
  \end{tabular}}
  \caption{Comparison of condition number and PDHG iterations for \tg{various} forest strategies on small random graphs ($|\mathcal{V}|=512$) with varying edge to vertex ratio $\frac{m}{n}$.}
  \label{table:ROF_synthetic}
\end{table}

In spite of such favorable properties of matroid partitioning, 
its complexity grows like $\mathcal{O}(|\cE|^3 + |\cE|^2L)$, cf.~\cite{Knu73}, 
making its application prohibitive for large graphs.
\paragraph{Greedy nested forests.} As a remedy, 
we propose the following ``greedy nested forests'' heuristic: given the input graph $\cG$ we successively subtract a spanning forest until no edges remain. The individual subtracted forests form the graph partitioning $\{ \cG_l \}_{l=1}^L$. While this greedy approach does not guarantee a minimal number of forests $L$, the partition still satisfies the assumption of Theorem~\ref{thm:nested}. Indeed, each edge in the forest $\cG_l$ can be represented by a path in $\cG_{l-1}$ \tg{since} adding that edge from $\cG_l$ to $\cG_{l-1}$ would form a cycle \tg{due to} the spanning forest property.

\paragraph{Greedy linear forests.} \tg{Since} the dual update can be computed very efficiently for linear forests we modify the above procedure to yield a linear forest decomposition. Before subtracting the spanning forest from the residual graph, we remove all edges which contain a vertex with degree larger than 2. \tg{In addition}, we check whether it is possible to add any edges from the residual graph to the current linear forest without turning it into a general forest.

In Table~\ref{table:ROF_synthetic} we show the condition number for the different partitioning strategies on small random Erd\H{o}s-Renyi graphs of varying average degree. While the matroid partitioning strategy finds the lowest condition number, both greedy heuristics also lead to a reduction in condition number for most cases. The greedy nested forest heuristic works best for graphs with low edge-to-vertex ratio, while the linear forest heuristic is preferable for dense graphs.

\subsection{Proximal Evaluation on Forests} \label{sec:backward}
\newcommand{\Comment}[1]{\textit{// #1}}
\newcommand{\LineComment}[1]{\hfill\textit{// #1}}
\begin{algorithm}[tb]
  \caption{Total variation on a forest~\cite[Algorithm~2]{KPR16}.}
  \label{alg:backward}
  \begin{algorithmic}
    \STATE { {\bfseries Input:} Weighted forest $\cG_l = (\cV, \cE_l, \omega_l)$, $f_l \in \bR^\cV$.} \\[1mm]
    \FOR{each tree $\cT = (\cV', \cE', \omega')$ in $\cG_l$}
    \STATE{\Comment{Message passing from leaves to the root.}} 
    \FOR {each $(i,j)\in \cE'$ from leaves to root $r \in \cV'$}
    \STATE{$\hat{m}_{i}(u) = u - f_{l,i} + \sum_{(j,i) \in \cE'} \hat{m}_{j,i}(u)$.}
    \STATE{$m_{i,j}(u) = \text{clip}_{[-\omega'_{i,j}, ~\omega'_{i,j}]} \left( \hat{m}_{i}(u) \right)$.}
    \STATE{find $\lambda^-_{i,j}$, $\lambda^+_{i,j}$ which satisfy both\\
      \qquad $\hat{m}_{i}(\lambda^-_{i,j})=-\omega'_{i,j}$, $\hat{m}_{i}(\lambda^+_{i,j})=\omega'_{i,j}$.}
    \ENDFOR
    \\[1mm]
    \STATE{\Comment{Compute solution on tree.}}
    \STATE{Solve $\hat{m}_{r}(v_{l,r}) = 0$ for $v_{l,r}$.}
    \FOR {each $(i, j) \in \cE'$ from root toward leaves}
    \STATE{$v_{l,i} = \text{clip}_{[\lambda^-_{i,j},\lambda^+_{i,j}]}(v_{l,j})$.}
    \ENDFOR 
    \ENDFOR\\[1mm]

    \STATE { {\bfseries Output:} $v_l \in \bR^\cV$.} 
  \end{algorithmic}
\end{algorithm}

In this section we will discuss how the dual update \eqref{eq:ppdhg2} is
computed for our combinatorial preconditioner $T$. 
Assuming that $F^*$ is separable across the subgraphs $\{ \cG_l \}_{l=1}^L$,
\begin{equation}
  F^*(p) = \sum_{l=1}^L F_l^*(p|_{\cE_l}), \notag
\end{equation}
the dual update \eqref{eq:ppdhg2} decomposes into parallel problems:
\iali{
p^{k+1}|_{\cE_l} = &\arg\min_{p \in \bR^{\cE_l}}~\frac{t}{2}\|p-p^k|_{\cE_l}\|_{T_l}^2 +F_l^*(p) \notag \\ 
& -\ip{K\bar u^{k+1}|_{\cE_l}}{p}, \quad \forall ~ 1 \leq l \leq L.
\label{eq:dual_update}
}
Let us now further assume that the individual $\cG_l$ are forests and $F(\cdot) = \norm{\cdot}_1$. 

Expanding the norm $\|\cdot\|_{T_l}^2$ and completing the square in \eqref{eq:dual_update}
leads to the problem
\iali{
p^{k+1}|_{\cE_l} = \arg\min_{ \|p\|_{\infty} \leq 1}~&\frac{1}{2}\|K_l^\top p + f_l \|^2,
\label{eq:dual_problem_tv}
}
with $f_l = - K_l^\top (p^k|_{\cE_l}) - (2u^{k+1} - u^k) / t$. The dual problem to \eqref{eq:dual_problem_tv} is given by
\iali{
  v_l = \arg\min_{u \in \bR^{\cV}} ~ \frac{1}{2} \norm{u - f_l}^2 + \norm{K_l u}_1.
  \label{eq:dual_primal_problem_tv}
}
This further parallelizes into weighted total variation problems on the individual trees in the forest $\cG_l$.
These problems can be handled due to recent advances in direct total-variation solvers; see \cite{davies2001local,dumbgen2009extensions,johnson2013dynamic,Con13,BaSr14,KPR16}. The original taut-string algorithm \cite{davies2001local,dumbgen2009extensions} solves the 1D total-variation problem in $\mathcal{O}(n)$ iterations on a chain. Condat \cite{Con13} proposed an algorithm which has worst-case $\mathcal{O}(n^2)$ complexity but it achieves good performance in practice. Barbero and Sra~\cite{BaSr14} proposed a generalization of the taut-string approach to the case of weighted total variation which runs in $\mathcal{O}(n)$. The approach proposed by Johnson~\cite{johnson2013dynamic} also runs in $\mathcal{O}(n)$ time, works for weighted total variation and has good practical performance. Furthermore, a more memory efficient implementation of Johnson's algorithm generalization to trees was proposed by Kolmogorov~\etal~\cite{KPR16} -- which appears to be \tg{state-of-the-art}.
\begin{algorithm}[t!]
   \caption{PDHG with combinatorial preconditioning for total variation minimization on weighted graphs.}
   \label{alg:pdhg_forest}
\begin{algorithmic}
  \STATE { {\bfseries Input:} $u^0 \in \bR^\cV$, $p^0 \in \bR^\cE$, $\cG=(\cV,\cE,\omega)$.}\\[1mm] 
  \STATE { Compute decomposition of $\cG$ into forests $\{ \cG_l \}_{l=1}^L$. }  
  \STATE { Pick $s, t > 0$ satisfying $st > L$. }\\[1mm]
  \FOR{$k \geq 0$ \textbf{while} not converged}
  \STATE{} \Comment{primal update}\\
  \STATE{$u^{k+1} = \arg\min_u~\frac{s}{2}\|u-u^k\|^2+\ip{K^\top p^k}{u}+G(u)$.}\\
  \STATE{$\bar u^{k+1} = 2 u^{k+1} - u^k$.} \\[1mm]
  \STATE{} \Comment{dual update}\\
  \FOR{each forest $l = 1 \hdots L$}
  \STATE{$f_l = -K_l^\top (p^k|_{\cE_l}) -\bar u^{k+1} / t$.}
  \STATE{Obtain $v_l$ through \eqref{eq:dual_primal_problem_tv} on forest $\cG_l$.}
  \STATE{Obtain $p^{k+1}|_{\cE_l}$ by $K_l^\top p_l^{k+1} = v_l - f_l$.}
  \ENDFOR{}
  \ENDFOR
\end{algorithmic}
\end{algorithm}
We use our implementation of the algorithm proposed by  Kolmogorov~\etal~\cite{KPR16} to find the exact minimizer on each tree. The algorithm computes derivatives of messages $\hat{M}_{i}: \mathbb{R} \rightarrow \mathbb{R}$ and $M_{i,j}: \mathbb{R} \rightarrow \mathbb{R}$ for $(i,j) \in \cE$ and $i, j \in \cV$ in the order from leaves toward the root, which are defined as the following:
\iali{
\hat{M}_{i}{(u)} = \frac{1}{2}(u - f_{l,i})^2 + \sum_{(k,i) \in \cE} M_{k,i}(u), \notag \\
M_{i,j}(u) = \min_{u \in \bR}\left[ \hat{M}_{i}(u) + \omega_{i,j}|u_{j}-u| \right]. \notag
}
The derivatives are denoted as $\hat{m}_{i} := \hat{M}'_{i}$, and $m_{i,j} := M'_{i,j}$. The procedure is summarized in Algorithm~\ref{alg:backward}.

\begin{table*}[t!]
  \centering
  \resizebox{\textwidth}{!}{
  \begin{tabular}{lcc||cc|cc|cc|cc}
    \multicolumn{3}{c||}{Instance} & \multicolumn{2}{c|}{None~\cite{ChPo11}} & \multicolumn{2}{c|}{Diagonal~\cite{PoCh11}} & \multicolumn{2}{c|}{Nested Forest} & \multicolumn{2}{c}{Linear Forest} \\
    name & $\frac{|\cV|}{1024}$ & $\frac{|\cE|}{|\cV|}$ & it & time[s] & it & time[s] & it & time[s] & it & time[s]\\[1mm]
    \hline

    \textbf{synthetic} &&&&&&&&& \\
    rmf-long.n2 & 64 & 2.87 & -- & -- & -- & -- & \textbf{1794} & 62.9 (+0.7) & 5070 & \textbf{24.4} (+1.2) \\
    rmf-wide.n2 & 32 & 2.84 & -- & -- & -- & -- & \textbf{159} & \textbf{2.6} (+0.3) & 23518 & 62.2 (+0.6)\\
    wash-rlg-long.n1024 & 64 & 2.99 & 73309 & 134.1 (+9.4) & 14333 & \textbf{49.7} (+0.0) & \textbf{5848} & 373.9 (+0.8) & 18798 & 108.7 (+1.3) \\[1mm]

    \textbf{bisection} &&&&&&&&& \\
    horse-48112 & 47 & 2.99 & -- & -- & 21593 & 38.3 (+0.0) & \textbf{964} & \textbf{33.9} (+0.3) & 19145 & 90.4 (+2.8) \\
    alue7065-33338 & 33 & 1.61 & -- & -- & 23218 & 21.5 (+0.0) & \textbf{2499} & \textbf{21.4} (+0.17) & 49452 & 99.4 (+1.7)\\[1mm]

    \textbf{stereo}  &&&&&&&&& \\  
    BVZ-venus1* & 162 & 1.99 & 9068 & 20.8 (+7.9) & 3741 & 20.2 (+0.0) & 1111 & 90.7 (+0.9) & \textbf{414} & \textbf{1.9} (+0.1) \\
    BVZ-venus2* & 162 &  1.99 & 10099 & 24.4 (+7.9) & 3124 & 17.2 (+0.0) & 1065 & 88.8 (+0.9) & \textbf{384} & \textbf{1.7} (+0.1)\\
    {KZ2-sawtooth1} & 310 &  2.91 & 96974 & 736.5 (+31.8) & 3468 & 51.7 (+0.0) & \textbf{336} & 102.8 (+3.0) & 526 & \textbf{14.9} (+6.1)\\
    {KZ2-sawtooth2} & 294 & 2.79 & 95849 & 675.3 (+33.1) & 3520 & 48.2 (+0.0) & \textbf{432} & 124.8 (+2.9) &  652 & \textbf{15.7} (+5.8)\\[1mm]

    \textbf{misc vision} &&&&&&&&& \\
    texture\_graph & 9 & 4.76 & 4860 & \textbf{1.6} (+1.45) & 3554 & 1.9 (+0.0) & \textbf{1091} & 9.12 (+0.1) & 1669 & 1.7 (+0.3)\\
    lazybrush-mangagirl* & 579 & 1.99 & -- & -- & -- & -- & 13318 & 5727.3 (+3.3) & \textbf{6330} & \textbf{95.4} (+0.4)\\
    imgseggmm-ferro & 231 & 3.98 & 3594 & 32.2 (+75.9) & 5806 & 95.0 (+0.0) & 786 & 276.8 (+4.4) & \textbf{775} & \textbf{25.6} (+3.1)\\

    \hline
  \end{tabular}}
  \caption{We compare the number of iterations and time required to reach a relative primal dual gap of less than $10^{-10}$ on various graph cut instances. \tr{The time for constructing the preconditioners is shown in brackets (for ``None'' it is the time taken to estimate \tb{$\sigma_{\max}(K)$}). {``--''} indicates that the method failed to reach the desired tolerance within $10^5$ iterations. {``*''} indicates that the graph has grid toplogy.} }
  \label{table:ROF}
\end{table*}
After running Algorithm~\ref{alg:backward} for each forest $\cG_l$, the solution 
$p^{k+1}|_{\cE_l}$ to \eqref{eq:dual_problem_tv} is given by the optimality condition
\begin{equation}
  K_l^\top p^{k+1}|_{\cE_l} = v_l - f_l. \notag
\end{equation}
\tg{Since} each $\cG_l$ is a forest, the corresponding $K_l^\top$ matrix has full column rank which implies that the linear system has a unique solution. Rows of $K_l^\top$ corresponding to leaf nodes in the tree contain exactly one \tg{nonzero} element. Therefore, we solve the linear equation by starting from leaves toward the root. Consistency on the branch nodes is guaranteed \tg{due to} the uniqueness of the solution. 
Even though we only discussed the case of total variation $F(\cdot) = \norm{\cdot}_1$, we remark that for various other choices of $F$ (e.g., Huber penalty) efficient solvers on trees are conceivable.

\section{Numerical Validation}
The preconditioned PDHG algorithm \eqref{eq:ppdhg1}--\eqref{eq:ppdhg2} for total variation minimization
on weighted graphs is summarized in Algorithm~\ref{alg:pdhg_forest}. We assume that the primal update can be \tr{efficiently computed, e.g.~if $G$ is separable}.

For the experiments we compare the proposed preconditioners to the unpreconditioned variant of PDHG ($S = I$, $T = I$, $s = \norm{K}$, $t = \norm{K}$), the diagonal preconditioners from~\cite{PoCh11} with choice of $\alpha = 1$ and $s = t = 1$. When using the proposed preconditioners we employ the balanced step size choice $s = \sqrt{L}$, $t = \sqrt{L}$.

We implemented all algorithms in MATLAB, whereas time critical parts such as the total variation solver on a tree (Algorithm~\ref{alg:backward}) were implemented in C++.

\subsection{Generalized Fused Lasso}
\label{sec:ROF}
The fused lasso~\cite{tibshirani2005sparsity}, also known as the Rudin-Osher-Fatemi (ROF) model~\cite{Rudin-Osher-Fatemi-92} to the image processing community is readily generalized to graphs:
\begin{equation}
  \min_{u \in \bR^{\cV}} ~ \frac{1}{2} \norm{u - f}^2 + \norm{K u}_1.
  \label{eq:ROF}
\end{equation}
Despite its simplicity, this model \tg{has} a plethora of applications in statistics~\cite{xin2014efficient}, machine learning~\cite{hein2011beyond,bresson2013multiclass} and computer vision~\cite{stuhmer2010real,newcombe2011dtam}, often as a subproblem in sequential convex programming for nonconvex minimization.

We solve \eqref{eq:ROF} using the accelerated PDHG variant (\cite[Algorithm~2]{ChPo11}, $\gamma = 0.25$) \tg{since} the energy \eqref{eq:ROF} is $1$-strongly convex. In Table~\ref{table:ROF_synthetic} we compare the number of iterations required to solve \eqref{eq:ROF} on small random graphs. We stop the algorithm once the relative primal-dual gap drops below $10^{-12}$. It can be observed that there is a clear correlation between the condition number $\kappa(T^{-1/2} K)$ and the number of required iterations, validating the discussion from Section~\ref{sec:pdhg}. The optimal preconditioning based on matroid partitioning performs best. 

We further validate our preconditioner on the maximum flow benchmark~\cite{goldberg11}\footnote{\url{http://www.cs.tau.ac.il/~sagihed/ibfs/}}. It is well known (cf.~\cite{chambolle2009total,PoCh11}) that the minimum cut in a flow network can be obtained by thresholding the minimizing argument of \eqref{eq:ROF}.
We remark that graph cuts can be efficiently \tg{found} by highly specialized combinatorial solvers such as \tg{that in}~\cite{goldberg11}. The point of this experiment is, however, to compare different preconditioners for continuous first-order algorithms on challenging real-world graphs.

In Table~\ref{table:ROF}, we show iterations and run time for the proposed forest preconditioners, the unpreconditioned PDHG algorithm and the diagonal preconditioners~\cite{PoCh11}. \tg{Due to} the size of the graphs, matroid partitioning is intractable, and we resort to the greedy nested forest and linear forest approaches. 
\tg{Combinatorial preconditioning consistently} leads to a significant decrease in iterations. In all except one case, the overall lowest run time is achieved either by linear forest decomposition or greedy nested forest decomposition. 

Despite the huge decrease in \tg{the} total number of outer iterations for the nested forest preconditioning, in some cases, the overall run time is worse than without preconditioning. This motivates the construction of preconditioners like the greedy linear forests. These are clearly suboptimal with respect to condition number, \tg{but} yield a good balance between efficient resolution of the backward step and \tg{the} number of outer iterations. The chains on regular grid achieve the best of both worlds and lead to an improvement \tg{in} runtime of an order of magnitude.

\subsection{Multiclass Segmentation}
\begin{figure}[t!]
  \begin{tabular}{cc}
    \includegraphics[width=0.47\linewidth]{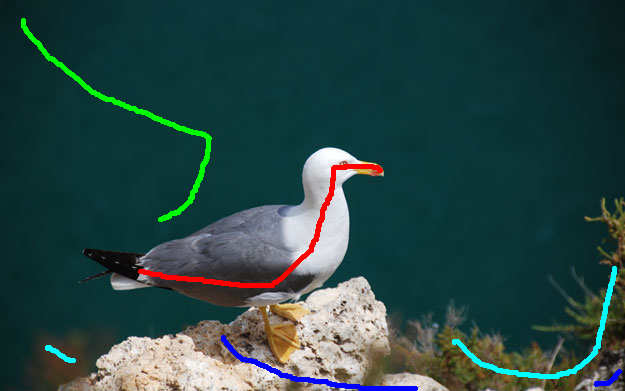}&
    \includegraphics[width=0.47\linewidth]{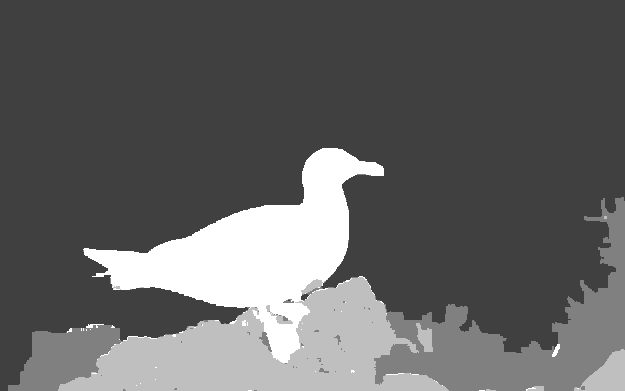}\\
    {\small Input ($625 \times 391$) + Scribbles} & {\small Result ($11$ PDHG iterations)}
  \end{tabular}
  \caption{Interactive image segmentation. Combinatorial preconditioning works particularly well 
  for graphs with underlying grid topology. The multiclass segmentation shown on the right is obtained after only 11 PDHG iterations.}
  \label{fig:clustering}
\end{figure}
As a second example, we consider the multiclass total variation segmentation problem 
\begin{equation}
  \begin{aligned}
    \min_{ u \in \bR^{\tr{|\cV|} \times C} } ~ \sum_{i \in \cV} ~&\delta \{ u_i \in \Delta_C \} + \varepsilon H(u_i) + 
\ip{u_i}{\rho_i} + \norm{K u}_1,
  \end{aligned}
  \notag
\end{equation}
under entropic regularization \tr{$H(u_i) = \sum_{c=1}^C u_{i,c} \log u_{i,c}$. $\delta \{ u_i \in \Delta_C \}$} is the indicator function of the $C$-dimensional unit simplex $\Delta_C \subset \bR^C$.

The above model has various important applications in vision~\cite{zach2008fast,nieuwenhuis2013spatially} and transductive learning~\cite{garcia2014multiclass}. 
As the entropy term renders the objective \tr{($1/\varepsilon$)}-strongly convex we can again use the accelerated PDHG algorithm (\cite[Algorithm~2]{ChPo11} with $\gamma = 1 / \varepsilon$). \tr{We set $\varepsilon = 1$ for all experiments.} Note that the model fits the general saddle point problem \eqref{eq:spp} under the choice $F(p) = \norm{p}_1$, $G(u) = \sum_{i \in \cV} \delta \{ u_i \in \Delta_C \} + \varepsilon H(u_i) + \ip{u_i}{\rho_i}$. To compute the proximal subproblem in \eqref{eq:ppdhg1}
we first observe that 
\tr{$G^*(v) = \sum_{i \in \cV} \varepsilon \log \left( \sum_{c=1}^C \exp \left( (v_{i,c} - \rho_{i,c}) / \varepsilon \right) \right)$.}

Thanks to Moreau's identity we reduce \eqref{eq:ppdhg1} to the proximal evaluation of $G^*$, for which a few Newton iterations suffice.
In Table~\ref{table:clustering} we compare the performance of our combinatorial preconditioners on two of the aforementioned applications. For the transductive learning scenario, we follow the procedure described in \cite{garcia2014multiclass} to generate a $k$-nearest neighbour graph ($k=10$) on the synthetic ``three moons'' data set. As in~\cite{garcia2014multiclass}, the data term \tr{$\rho \in \bR^{|\cV| \times C}$} specifies the correct labels for $5\%$ of the points. We report a similar final accuracy ($98.9\%$) as the authors of~\cite{garcia2014multiclass}. As seen in Table~\ref{table:clustering}, the nested forest preconditioning \tr{performs best.} 

For the second application, we consider interactive image segmentation~\cite{nieuwenhuis2013spatially}. Following that paper, we compute the data term $\rho$ from user scribbles (see Fig.~\ref{fig:clustering}). The \tr{weights} are chosen based on the input image $I \in \bR^{\tr{|\cV|} \times 3}$ as
\begin{equation}
  \omega_{i,j} = \exp \left( -\xi \norm{I_i - I_j}^2 \right), ~ \forall (i, j) \in \mathcal{E}. \notag
\end{equation}
This essentially \tr{encourages the segmentation boundary to coincide with image discontinuities. We use a fixed scale parameter $\xi = 0.1$ in all experiments.} The performance of the different preconditioning strategies is shown in Table~\ref{table:clustering}. Due to the underlying grid topology the natural linear forest decomposition into chains can be employed, which outperforms the other preconditioners and PDHG without preconditioning by an order of magnitude. While the nested forest preconditioner is competitive w.r.t. iterations, the proximal subproblem on the large spanning tree is very expensive. In contrast, the subproblem on the short chains can be computed \tr{efficiently}.

\begin{table}
  \centering
  \resizebox{1\linewidth}{!}{
  \begin{tabular}{l|c|c|c|c}
     & \multicolumn{1}{c|}{None~\cite{ChPo11}} & \multicolumn{1}{c|}{Diag.~\cite{PoCh11}} & \multicolumn{1}{c|}{Nest. Forest} & \multicolumn{1}{c}{Lin. Forest} \\
    name & it (time[s]) & it (time[s]) & it (time[s]) & it (time[s])\\
    \hline
    & & & &\\[-3mm]
    3MOONS & 333 (2.8) & 474 (8.2) & \textbf{88} (\textbf{1.1}) & 303 (3.1) \\[1.5mm]
    icgbench-1* & 113 (47.0) & 131 (41.3) & 52 (41.9) & \textbf{13} (\textbf{3.1})  \\
    icgbench-2* & 124 (54.7) & 159 (54.1) & 58 (47.9) & \textbf{11} (\textbf{2.6}) \\
    icgbench-3* & 78 (29.1) & 95 (25.9) & 42 (27.2) & \textbf{9} (\textbf{1.6})\\[1mm]
    \hline
  \end{tabular} }
  \caption{We compare iterations and time (in brackets) required for PDHG under various choices of preconditioner to reach a relative primal-dual gap less than $5 \cdot 10^{-4}$. On general graphs, the greedy nested forests perform well while on regular grids (indicated with ``*'') the linear forest decomposition into chains works best.}
  \label{table:clustering}
\end{table}

\section{Conclusion}
We proposed a novel combinatorial preconditioner for proximal
algorithms on weighted graphs. The preconditioner is driven by a
disjoint decomposition of the edge set into forests. Our theoretical
analysis provides conditions under which such a decomposition achieves
an optimal condition number. Furthermore, we \tr{have shown} how provably optimal
preconditioners can be obtained: on \tb{two-dimensional} regular grids by a splitting into
horizontal and vertical chains, on general graphs by means of matroid
partitioning. We additionally proposed two fast heuristics to
construct reasonable preconditioners on large scale graphs.  We \tr{demonstrated} how the resulting scaled proximal evaluations can be carried out
by means of an efficient message passing algorithm on trees.  In an extensive numerical
evaluation we \tr{confirmed} \tr{practical gains} of preconditioning \tr{in terms of}
overall \tr{runtime as well as outer iteration numbers}. 

\subsubsection*{Acknowledgements} We thank Thomas Windheuser for fruitful discussions on combinatorial preconditioning. We gratefully acknowledge the support of the ERC Consolidator Grant 3D Reloaded.

{
\bibliographystyle{abbrvnat}
\bibliography{sections/biblio}
}

\end{document}